\newtheorem{theo}{Theorem}
\newtheorem{prop}[theo]{Proposition}
\theoremstyle{definition} \newtheorem{defi}[theo]{Definition}}
\theoremstyle{definition} }
\theoremstyle{definition} }
\theoremstyle{definition} \newtheorem*{nota}{Notation}}
\theoremstyle{remark} \newtheorem{rem}[theo]{Remark}}
\title[Combinatorial interpretations of the Kreweras triangle]{Combinatorial interpretations of the Kreweras triangle in terms of subset tuples}
\author{Ange Bigeni}
\thanks{National Research University Higher School of Economics, Faculty of Mathematics, Usacheva
str. 6, 119048, Moscow, Russia. \href{mailto:abigeni@hse.ru}{abigeni@hse.ru}}
\begin{document}

\selectlanguage{english}

\maketitle

\begin{abstract}
We show how the combinatorial interpretation of the normalized median Genocchi numbers in terms of multiset tuples, defined by Hetyei in his study of the alternation acyclic tournaments, is bijectively equivalent to previous models like the normalized Dumont permutations or the Dellac configurations, and we extend the interpretation to the Kreweras triangle.
\end{abstract}

\section*{Notations}

For all pair of integers $n < m$, the set $\{n,n+1,\hdots,m\}$ is denoted by $[n,m]$, and the set $[1,n]$ by $[n]$. The set of the permutations of $[n]$ is denoted by $\mathfrak{S}_n$.

\section{Introduction}
\subsection{Genocchi numbers, Kreweras triangle, Dumont permutations}
The Genocchi numbers $(G_{2n})_{n \geq 1} = (1,1,3,17,155,2073,\hdots)$~\cite{G} and median Genocchi numbers $(H_{2n+1})_{n \geq 0} = (1,2,8,56,608,\hdots)$~\cite{H} can be defined as the positive integers $G_{2n} = g_{2n-1,n}$ and $H_{2n+1} = g_{2n+2,1}$~\cite{DV} where $(g_{i,j})_{1 \leq j \leq i}$ is the Seidel triangle defined by 
\begin{align*}
g_{2p-1,j} &= g_{2p-1,j-1} + g_{2p-2,j},\\
g_{2p,j} &= g_{2p-1,j} + g_{2p,j+1},
\end{align*}
with $g_{1,1} = 1$ and $g_{i,j} = 0$ if $i < j$. It is well known that $H_{2n+1}$ is divisible by $2^n$ for all $n \geq 0$~\cite{Barsky}. The normalized median Genocchi numbers $(h_n)_{n \geq 0} = (1, 1, 2, 7, 38, 295, \hdots)$~\cite{h} are the positive integers defined by 
$$h_n = H_{2n+1}/2^n.$$
Dumont~\cite{Dumont} gave the first combinatorial models of the (median) Genocchi numbers. In particular, the set $PD2_n$ of the Dumont permutations of the second kind, that is, the permutations $\sigma \in \mathfrak{S}_{2n+2}$ such that $\sigma(2i-1) > 2i-1$ and $\sigma(2i) < 2i$ for all $i \in [n+1]$, whose cardinality $\# PD2_n$ equals $H_{2n+1}$ for all $n \geq 0$. In \cite{Kreweras}, Kreweras introduced the subset $PD2N_n \subset PD2_n$ of the normalized such permutations, \textit{i.e.}, the permutations $\sigma \in PD2_n$ such that $\sigma^{-1}(2i) < \sigma^{-1}(2i+1)$ for all $i \in [n]$, whose number is $\# PD2N_n = h_n$.

\begin{rem}
For all $(k,l) \in [n]^2$, let $PD2N_{n,k}$ (respectively $PD2N'_{n,l}$) be the subset of the $\sigma \in PD2N_n$ such that $\sigma(1) = 2k$ (respectively $\sigma(2n+2) = 2l+1$). It is easy to see that $\{ PD2N_{n,k} : k \in [n]\}$ and $\{PD2N'_{n,l} : l \in [n]\}$ are partitions of $PD2N_n$.
\end{rem}

In \cite{KB}, by introducing the model of the alternating diagrams and connecting them bijectively to the normalized Dumont permutations, Kreweras and Barraud proved that $$\# PD2N_{n,k} = \# PD2N'_{n,k} = h_{n,k}$$ where the Kreweras triangle $(h_{n,k})_{n \geq 1,k \in [n]}$~\cite{Kreweras} (see Figure~\ref{fig:krewerastriangle}) is defined by $h_{1,1} = 1$ and, for all $n \geq 2$ and $k \in [3,n]$, 
\begin{align}
\label{eq:definitionhnk}
h_{n,1} &= h_{n-1,1} + h_{n-1,2} + \hdots + h_{n-1,n-1}, \nonumber \\
h_{n,2} &= 2 h_{n,1} - h_{n-1,1},\\
h_{n,k} &= 2h_{n,k-1} - h_{n,k-2} - h_{n-1,k-1} - h_{n-1,k-2}.\nonumber
\end{align}

\begin{figure}[!h]
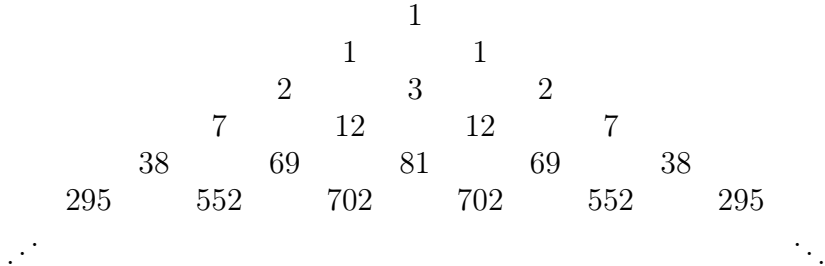

$$\begin{tabular}{ccccccccccccc}
& & & & & & 1 & & & & & \\
 && & & & 1 & & 1 & & & &  \\
 && & & 2 & & 3 & & 2 & & & \\
 & && 7 & & 12 & & 12 & & 7  \\
& & 38 & & 69 & & 81 & & 69 & & 38\\
& 295 & & 552 & & 702 & & 702 & & 552 & & 295\\
$\iddots$ & & &&&&&&&& & & $\ddots$
\end{tabular}$$
\caption{The Kreweras triangle.}
\label{fig:krewerastriangle}
\end{figure}

For example, we depict in Figure \ref{fig:repartitionPD2N3} how are partitionned the $h_3 = 2+3+2$ elements of~$PD2N_3$.

\begin{figure}[h]
\begin{center}

\begin{tikzpicture}[scale=1]

\draw (-4.3,3.5) -- (-4.3,-0);
\draw (-0.9,3.5) -- (-0.9,-0);
\draw (2.85,3.5) -- (2.85,-0);

\draw (-6,2.5) -- (6.5,2.5);
\draw (-6,1.5) -- (6.5,1.5);
\draw (-6,0.5) -- (6.5,0.5);

\draw (1,3) node[scale=1] {$21637485$};
\draw (4.75,3) node[scale=1] {$21436587$};
\draw (-2.75,2) node[scale=1] {$41627583$};
\draw (1,2) node[scale=1] {$41627385$};
\draw (4.75,2) node[scale=1] {$41526387$};
\draw (-2.75,1) node[scale=1] {$61427583$};
\draw (1,1) node[scale=1] {$61427385$};

\draw (-5.2,3) node[scale=1] {$PD2N_{3,1}$};
\draw (-5.2,2) node[scale=1] {$PD2N_{3,2}$};
\draw (-5.2,1) node[scale=1] {$PD2N_{3,3}$};

\draw (-2.75,0) node[scale=1] {$PD2N'_{3,1}$};
\draw (1,0) node[scale=1] {$PD2N'_{3,2}$};
\draw (4.75,0) node[scale=1] {$PD2N'_{3,3}$};

\end{tikzpicture}
\end{center}
\caption{The partition of $PD2N_3$.}
\label{fig:repartitionPD2N3}
\end{figure}

For all $n \geq 1$ and $k \in [n]$, the Kreweras triangle has the visible two properties
\begin{align}
h_{n,n} &= h_{n-1}, \label{eq:hn1}\\
h_{n,k} &= h_{n,n-k+1},
\label{eq:symmetryofhnk}
\end{align}
of which \cite{KB} implies interpretations in terms of $PD2N_n$. Formula~(\ref{eq:hn1}) follows from the bijection $\sigma \in PD2N'_{n,n} \mapsto \sigma_{|[2n]} \in PD2N_{n-1}$. Afterwards, let $\sigma \in PD2N_n$ and $(k,l) \in [n]^2$ such that $\sigma(1) = 2k$ and $\sigma(2n+2) = 2l+1$, we define two permutations $\sigma^t$ and $\sigma^r$ as follows.
\begin{itemize}
\item If $k = l$, we define $\sigma^t$ as $\sigma$, otherwise it is defined as the composition $\begin{pmatrix}
2k & 2l & 2l+1 & 2k+1
\end{pmatrix} \circ \sigma$.
\item We define $\sigma^r$ by $\sigma^r(i) = 2n+3 - \sigma(2n+3-i)$ for all $i \in [2n+2]$.
\end{itemize}
The maps $\sigma \mapsto \sigma^t$ and $\sigma \mapsto \sigma^r$ are involutions of $PD2N_n$ which induce bijections 
\begin{align*}
PD2N_{n,k} \cap PD2N'_{n,l} &\longleftrightarrow PD2N_{n,l} \cap PD2N'_{n,k},\\
PD2N_{n,l} \cap PD2N'_{n,k} &\longleftrightarrow PD2N_{n,n-k+1} \cap PD2N'_{n,n+1-l},
\end{align*}
from which follows Formula (\ref{eq:symmetryofhnk}). One can also obtain it by induction from System (\ref{eq:definitionhnk}) through the easy equality
$$h_{n,k} - h_{n,k-1} = \sum_{i = k}^{n-1} h_{n-1,i} - \sum_{i=1}^{k-2} h_{n-1,i}$$
for all $n \geq 1$ and $k \in [n]$ (where $h_{n,0}$ is defined as $0$).

There are several other bijectively equivalent models of the Kreweras triangle~\cite{Dellac,kreweras2,HZ,Feigin2,Bigeni}. 

\subsection{The Dellac configurations}
The Dellac configurations~\cite{Dellac} form the earliest combinatorial model of the Kreweras triangle and provide a geometrical analogous of the previous results. Recall that a Dellac configuration of size $n$ is a tableau $D$, made of $n$ columns and $2n$ rows, that contains $2n$ dots such that~:
\begin{itemize}
\item every row contains exactly one dot;
\item every column contains exactly two dots;
\item if there is a dot in the box $(j,i)$ of $D$ (\textit{i.e.}, in the intersection of its $j$-th column from left to right and its $i$-th row from bottom to top), then $j \leq i \leq j+n$.
\end{itemize}

The set of the Dellac configurations of size $n$ is denoted by $DC_n$. It can be partitionned into $\{DC_{n,k} : k \in [n]\}$ or $\{DC'_{n,l} : l \in [n]\}$ where $DC_{n,k}$ (respectively $DC'_{n,l}$) is the subset of the tableaux $D \in DC_n$ whose box $(k,n+1)$ (respectively $(l,n)$) contains a dot, for all $(k,l) \in [n]^2$. In \cite[Proposition 3.3]{Feigin}, Feigin constructs a bijection $f_1 : PD2N_n \rightarrow DC_n$ such that $f_1(PD2N_{n,k}) = DC_{n,k}$, hence $h_{n,k} = \# DC_{n,k}$, for all $k \in [n]$. One can also check that $f_1(PD2N'_{n,k}) = DC'_{n,k}$, so $h_{n,k} = \# DC'_{n,k}$. For example, the $h_3 = 2+3+2$ elements of $DC_3$ are partitionned as depicted in Figure \ref{fig:repartitionDC3}.

\begin{figure}[h]
\begin{center}

\begin{tikzpicture}[scale=0.8]

\draw (-3,6) -- (-3,-5);
\draw (-1,6) -- (-1,-5);
\draw (1,6) -- (1,-5);

\draw (-5,2.5) -- (3,2.5);
\draw (-5,-1) -- (3,-1);
\draw (-5,-4.5) -- (3,-4.5);

\begin{scope}[xshift=-0.6cm,yshift=3cm,scale=0.4]
\draw (0,0) grid[step=1] (3,6);
\draw (0,0) -- (3,3);
\draw (0,3) -- (3,6);
\fill (0.5,0.5) circle (0.2);
\fill (1.5,1.5) circle (0.2);
\fill (1.5,2.5) circle (0.2);
\fill (0.5,3.5) circle (0.2);
\fill (2.5,4.5) circle (0.2);
\fill (2.5,5.5) circle (0.2);
\end{scope}

\begin{scope}[xshift=1.4cm,yshift=3cm,scale=0.4]
\draw (0,0) grid[step=1] (3,6);
\draw (0,0) -- (3,3);
\draw (0,3) -- (3,6);
\fill (0.5,0.5) circle (0.2);
\fill (1.5,1.5) circle (0.2);
\fill (1.5,4.5) circle (0.2);
\fill (0.5,3.5) circle (0.2);
\fill (2.5,2.5) circle (0.2);
\fill (2.5,5.5) circle (0.2);
\end{scope}

\begin{scope}[xshift=-2.6cm,yshift=-0.5cm,scale=0.4]
\draw (0,0) grid[step=1] (3,6);
\draw (0,0) -- (3,3);
\draw (0,3) -- (3,6);
\fill (0.5,0.5) circle (0.2);
\fill (1.5,1.5) circle (0.2);
\fill (1.5,3.5) circle (0.2);
\fill (0.5,2.5) circle (0.2);
\fill (2.5,4.5) circle (0.2);
\fill (2.5,5.5) circle (0.2);
\end{scope}

\begin{scope}[xshift=-0.6cm,yshift=-0.5cm,scale=0.4]
\draw (0,0) grid[step=1] (3,6);
\draw (0,0) -- (3,3);
\draw (0,3) -- (3,6);
\fill (0.5,0.5) circle (0.2);
\fill (1.5,2.5) circle (0.2);
\fill (1.5,3.5) circle (0.2);
\fill (0.5,1.5) circle (0.2);
\fill (2.5,4.5) circle (0.2);
\fill (2.5,5.5) circle (0.2);
\end{scope}

\begin{scope}[xshift=1.4cm,yshift=-0.5cm,scale=0.4]
\draw (0,0) grid[step=1] (3,6);
\draw (0,0) -- (3,3);
\draw (0,3) -- (3,6);
\fill (0.5,0.5) circle (0.2);
\fill (1.5,3.5) circle (0.2);
\fill (1.5,4.5) circle (0.2);
\fill (0.5,1.5) circle (0.2);
\fill (2.5,2.5) circle (0.2);
\fill (2.5,5.5) circle (0.2);
\end{scope}

\begin{scope}[xshift=-2.6cm,yshift=-4cm,scale=0.4]
\draw (0,0) grid[step=1] (3,6);
\draw (0,0) -- (3,3);
\draw (0,3) -- (3,6);
\fill (0.5,0.5) circle (0.2);
\fill (1.5,1.5) circle (0.2);
\fill (1.5,4.5) circle (0.2);
\fill (0.5,2.5) circle (0.2);
\fill (2.5,3.5) circle (0.2);
\fill (2.5,5.5) circle (0.2);
\end{scope}

\begin{scope}[xshift=-0.6cm,yshift=-4cm,scale=0.4]
\draw (0,0) grid[step=1] (3,6);
\draw (0,0) -- (3,3);
\draw (0,3) -- (3,6);
\fill (0.5,0.5) circle (0.2);
\fill (1.5,2.5) circle (0.2);
\fill (1.5,4.5) circle (0.2);
\fill (0.5,1.5) circle (0.2);
\fill (2.5,3.5) circle (0.2);
\fill (2.5,5.5) circle (0.2);
\end{scope}

\draw (-4,4.25) node[scale=1.2] {$DC_{3,1}$};
\draw (-4,0.75) node[scale=1.2] {$DC_{3,2}$};
\draw (-4,-2.75) node[scale=1.2] {$DC_{3,3}$};

\draw (-2,-5) node[scale=1.2] {$DC'_{3,1}$};
\draw (0,-5) node[scale=1.2] {$DC'_{3,2}$};
\draw (2,-5) node[scale=1.2] {$DC'_{3,3}$};

\end{tikzpicture}
\end{center}
\caption{The partition of $DC_3$.}
\label{fig:repartitionDC3}
\end{figure}
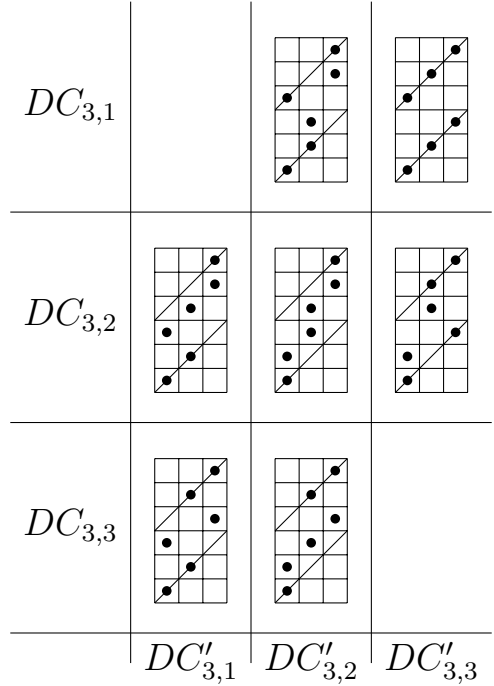

The combinatorial interpretations of Formulas (\ref{eq:hn1}) and (\ref{eq:symmetryofhnk}) in terms of Dellac configurations are simple. Every element of $DC_{n-1}$ can be obtained by deleting the $n$-th colomn (from left to right) and the $(n+1)$-th and $2n$-th rows (from bottom to top) of a unique element of $DC'_{n,n}$, which gives Formula (\ref{eq:hn1}). Afterwards, for all $D \in DC_{n,k} \cap DC'_{n,l}$,
\begin{itemize}
\item let $D^t \in DC_{n,l} \cap DC'_{n,k}$ be obtained by deleting the dots of the boxes $(k,n+1)$ and $(l,n)$ of $D$ and placing dots in the boxes $(l,n+1)$ and $(k,n)$,
\item let $D^r \in DC_{n,n+1-l} \cap DC'_{n,n-k+1}$ be obtained by rotating $D$ through $180^{\circ}$,
\end{itemize}
the maps $D \mapsto D^t$ and $D \mapsto D^r$ are involutions of $DC_n$ that induce bijections
\begin{align*}
DC_{n,k} \cap DC'_{n,l} &\longleftrightarrow DC_{n,l} \cap DC'_{n,k},\\
DC_{n,l} \cap DC'_{n,k} &\longleftrightarrow DC_{n,n-k+1} \cap DC'_{n,n+1-l},
\end{align*}
from which follows Formula (\ref{eq:symmetryofhnk}).

\subsection{Hetyei's model}
In his study of the alternation acyclic tournaments~\cite{Hetyei}, Hetyei proved that the median Genocchi number $H_{2n+1}$ is the number of pairs
$$((a_1,\hdots,a_n),(b_1,\hdots,b_n)) \in \mathbb{Z}^n \times \mathbb{Z}^n$$
such that $(a_i,b_i) \in [0,n] \times [n]$ for all $i \in [n]$, and the set $[n]$ is contained in the multiset $\{a_1,b_1,\hdots,a_n,b_n\}$. He then defined a free group action of $\left( \mathbb{Z} \backslash 2 \mathbb{Z} \right)^n$ on the set of these pairs, whose orbits are indexed by the $n$-tuples $(\{u_l,v_l\})_{l \in [n]}$ such that $(u_l,v_l) \in [l]^2$ for all $l \in [n]$ and the multiset $\{u_1,v_1,\hdots,u_n,v_n\}$ contains $[n]$, which raises a new proof of $H_{2n+1}$ being a multiple of $2^n$, and a new combinatorial model of $h_n$ through the set $\mathcal{M}_n$ of these tuples $(\{u_l,v_l\})_{l \in [n]}$. For example, the $h_3 = 7$ elements of $\mathcal{M}_3$ are 
\begin{align*}
&\{1,1\},\{2,2\},\{3,3\}\nonumber\\
&\{1,1\},\{1,2\},\{3,3\}\nonumber\\
&\{1,1\},\{2,2\},\{2,3\}\nonumber\\
&\{1,1\},\{1,2\},\{2,3\}\nonumber \\
&\{1,1\},\{1,1\},\{2,3\}\nonumber\\
&\{1,1\},\{2,2\},\{1,3\}\nonumber\\
&\{1,1\},\{1,2\},\{1,3\}\nonumber.
\end{align*}

It remains to connect $\mathcal{M}_n$ bijectively to the previous models of $h_n$. In Section \ref{sec:feigin}, we describe a model introduced by Feigin is his study of the degenerate flag varieties~\cite{Feigin}, and whose construction fits $\mathcal{M}_n$ in the best way. Incidentally, we define a slight adjustment of this model in a way that describes its inner construction. In Section \ref{sec:bijection}, we construct a bijection between Feigin's and Hetyei's model, which provides a combinatorial interpretation of the Kreweras triangle in terms of $\mathcal{M}_n$.

\section{Feigin's model}
\label{sec:feigin}

In order to label the torus fixed points of the degenerate flag variety $\mathcal{F}_n^a$, Feigin~\cite{Feigin} introduced the set $\mathcal{I}_n$ of the tuples $(I_0,\hdots,I_{n})$ where $I_i \subset~[n]$ has the conditions
\begin{align}
\#I_i &=i, \label{conditionI1}\\
I_{i-1} \backslash \{i\} &\subset I_{i}, \label{conditionI2}
\end{align}
In \cite[Proposition 3.1]{Feigin}, Feigin constructs a bijection $f_2 : \mathcal{I}_n \rightarrow DC_n$, thus $\# \mathcal{I}_n = h_n$. The set $\mathcal{I}_n$ can be partitionned into $\{\mathcal{I}_{n,k} : k \in [n]\}$ or $\{\mathcal{I}'_{n,l} : l \in [n]\}$ where $\mathcal{I}_{n,k}$ (respectively $\mathcal{I}'_{n,l}$) is the subset of the \linebreak elements $(I_0,\hdots,I_n) \in \mathcal{I}_n$ such that $k = \min \{i : 1 \in I_i\}$ (respectively $l = \min \{i : n \in I_i\}$). One can check that $f_2(\mathcal{I}_{n,k}) = DC_{n,k}$ and $f_2(\mathcal{I}'_{n,l}) = DC'_{n,l}$, so $\#\mathcal{I}_{n,k} = \# \mathcal{I}'_{n,k} = h_{n,k}$.  For example, the \linebreak $h_3 = 2+3+2$ elements of $\mathcal{I}_3$ are partitionned as depicted in Figure \ref{fig:repartitionI3}.

\begin{figure}[h]
\begin{center}

\begin{tikzpicture}[scale=1]

\draw (-4.6,3.5) -- (-4.6,-0);
\draw (-0.9,3.5) -- (-0.9,-0);
\draw (2.85,3.5) -- (2.85,-0);

\draw (-5.5,2.5) -- (6.5,2.5);
\draw (-5.5,1.5) -- (6.5,1.5);
\draw (-5.5,0.5) -- (6.5,0.5);

\draw (1,3) node[scale=1] {$\emptyset,\{1\},\{1,3\},[3]$};
\draw (4.75,3) node[scale=1] {$\emptyset,\{1\},\{1,2\},[3]$};
\draw (-2.75,2) node[scale=1] {$\emptyset,\{3\},\{1,3\},[3]$};
\draw (1,2) node[scale=1] {$\emptyset,\{2\},\{1,3\},[3]$};
\draw (4.75,2) node[scale=1] {$\emptyset,\{2\},\{1,2\},[3]$};
\draw (-2.75,1) node[scale=1] {$\emptyset,\{3\},\{2,3\},[3]$};
\draw (1,1) node[scale=1] {$\emptyset,\{2\},\{2,3\},[3]$};

\draw (-5.2,3) node[scale=1] {$\mathcal{I}_{3,1}$};
\draw (-5.2,2) node[scale=1] {$\mathcal{I}_{3,2}$};
\draw (-5.2,1) node[scale=1] {$\mathcal{I}_{3,3}$};

\draw (-2.75,0) node[scale=1] {$\mathcal{I}'_{3,1}$};
\draw (1,0) node[scale=1] {$\mathcal{I}'_{3,2}$};
\draw (4.75,0) node[scale=1] {$\mathcal{I}'_{3,3}$};

\end{tikzpicture}
\end{center}
\caption{The partition of $\mathcal{I}_3$.}
\label{fig:repartitionI3}
\end{figure}

In the following, we define a tweaking of this model.

\begin{nota}
For all $n$-tuple $(S_1,\hdots,S_n)$ of subsets of $[n]$ and for all $i \in [n]$, the set $\{j \in [n] : i \in S_j\}$ is denoted by $S_i^{-1}$.
\end{nota}

\begin{defi}
For all $n \geq 1$, let $\mathcal{S}_n$ be the set of the tuples $(S_1,\hdots,S_n)$ of subsets of $[n]$ with the conditions
\begin{itemize}
\item $\# S_i = \# S_i^{-1} = 1$ or $2$,
\item if $\# S_i = 2$, then $S_i^{-1} = \{i_1,i_2\}$ for some $i_1 < i < i_2$.
\end{itemize}
\end{defi}

\begin{rem}
We can partition $\mathcal{S}_n$ into $\{S_{n,k} : k \in [n]\}$ and $\{S'_{n,l} : l \in [n]\}$ where $S_{n,k}$ (respectively $S'_{n,k}$) is the set of the $(S_1,\hdots,S_n)$ such that $S_1^{-1} = \{k\}$ (respectively $S_n^{-1} = \{l\}$).
\end{rem}

\begin{prop}
The map $(I_i)_{i \in [0,n]} \mapsto (I_i \backslash I_{i-1})_{i \in [n]}$ is a bijection between $\mathcal{I}_n$ and $\mathcal{S}_n$, which sends $\mathcal{I}_{n,k}$ and $\mathcal{I}'_{n,l}$ to $\mathcal{S}_{n,k}$ and $\mathcal{S}'_{n,l}$ respectively. In particular $h_{n,k} = \# \mathcal{S}_{n,k} = \# \mathcal{S}'_{n,k}$.
\end{prop}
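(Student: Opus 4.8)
The plan is to show that the map $\phi : (I_i)_{i \in [0,n]} \mapsto (S_i)_{i \in [n]}$ with $S_i = I_i \setminus I_{i-1}$ is well-defined (lands in $\mathcal{S}_n$), has an explicit inverse, and respects the two partitions. First I would check that $\phi$ is well-defined. Given $(I_i) \in \mathcal{I}_n$, condition (\ref{conditionI2}) says $I_{i-1} \setminus \{i\} \subset I_i$, so the only element of $I_{i-1}$ that can fail to lie in $I_i$ is $i$ itself; combined with $\#I_i = i = \#I_{i-1} + 1$ from (\ref{conditionI1}), a counting argument forces $|I_i \setminus I_{i-1}|$ to be $1$ (if $i \in I_{i-1}$, hence $i \notin I_i$ is impossible — wait, rather if $i \in I_{i-1} \cap I_i$) or $2$ (if $i \in I_{i-1} \setminus I_i$). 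More precisely: $I_i \supseteq I_{i-1} \setminus \{i\}$ has $\geq i-1$ or $\geq i-2$ elements depending on whether $i \in I_{i-1}$, and since $\#I_i = i$ we get $S_i = I_i \setminus I_{i-1}$ has size $1$ or $2$, with $|S_i| = 2$ exactly when $i \in I_{i-1} \setminus I_i$. I would then verify $\#S_i^{-1} = \#S_i$: note $j \in S_i^{-1}$ means $i \in S_j = I_j \setminus I_{j-1}$, i.e. $j$ is a moment when $i$ enters or (since elements can only leave an $I_j$ when $j$-th step removes $j$, i.e. $i = j$) re-enters; tracking the indicator sequence $(\mathbf{1}[i \in I_0], \mathbf{1}[i \in I_1], \dots, \mathbf{1}[i \in I_n])$, which starts at $0$ (as $I_0 = \emptyset$) and ends at $1$ (as $I_n = [n]$), each index in $S_i^{-1}$ is an up-step of this sequence, down-steps can only occur at index $j = i$, and a down-step occurs iff $|S_i| = 2$; so the number of up-steps equals $1$ plus the number of down-steps, giving $\#S_i^{-1} = 1$ or $2$ matching $\#S_i$, and when it is $2$ the down-step at position $i$ sits strictly between the two up-steps, yielding $S_i^{-1} = \{i_1, i_2\}$ with $i_1 < i < i_2$. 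This is the step I expect to be the main obstacle — it requires carefully reading the subset-inclusion conditions as a bookkeeping of entrance/exit times and confirming both the cardinality equality and the interlacing $i_1 < i < i_2$.

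Next I would construct the inverse map. Given $(S_1, \dots, S_n) \in \mathcal{S}_n$, set $I_0 = \emptyset$ and $I_i = (I_{i-1} \setminus \{i\}) \cup S_i$ (or equivalently $I_i = I_{i-1} \cup S_i$ when $|S_i| = 1$ and $i \in I_{i-1}$ impossible — better to state $I_i = (I_{i-1} \cup S_i) \setminus \{i\}$ when $|S_i|=2$ and $I_i = I_{i-1} \cup S_i$ when $|S_i|=1$; one unified formula is $I_i = (I_{i-1} \setminus \{i\}) \cup S_i$, which works since when $|S_i|=1$ the conditions force $i \notin I_{i-1}$). I would check by induction that $\#I_i = i$: the recursion removes $i$ and adds $|S_i|$ elements, and the conditions on $S_i^{-1}$ guarantee that the element removed and added are accounted for exactly so that the size goes up by one. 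One then verifies $I_{i-1} \setminus \{i\} \subseteq I_i$ immediately from the formula, and that $I_n = [n]$ because every $m \in [n]$ satisfies $\#S_m^{-1} \geq 1$ so $m$ enters some $I_j$ and, once it last enters (at $j = \max S_m^{-1}$, which is $> m$ when $|S_m^{-1}|=2$ and $\geq m$ when $=1$, in either case $\geq m$ so no subsequent removal step $= m$ occurs), it stays. Checking $\phi$ and this inverse are mutually inverse is then routine unwinding of the two recursions.

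Finally I would handle the partition compatibility. By definition $1 \in I_i$ for the first time at $i = k$ means $1 \in S_k$ and $1 \notin S_j$ for $j < k$, i.e. $\min S_1^{-1} = k$; but $1$ can only be removed at step $1$, so if $1 \in I_k$ then $1 \in I_j$ for all $j \geq k$, hence $S_1^{-1} = \{k\}$ exactly (it cannot have a second element, as that would require $1$ to leave and re-enter, impossible past step $1$ unless $k = 1$, but $k=1$ still gives $S_1^{-1} = \{1\}$). Thus $(I_i) \in \mathcal{I}_{n,k} \iff S_1^{-1} = \{k\} \iff \phi((I_i)) \in \mathcal{S}_{n,k}$. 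Symmetrically, $n \in I_i$ for the first time at $i = l$ corresponds to $n \in S_l$, and since $n$ can be removed only at step $n$, but $l \leq n$ and $I_n = [n]$ forces $n$ to be present from step $l$ onward with no removal (removal at step $n$ would require $n \in I_{n-1}$ then $n \notin I_n$, contradicting $I_n = [n]$); so again $S_n^{-1} = \{l\}$ and $(I_i) \in \mathcal{I}'_{n,l} \iff \phi((I_i)) \in \mathcal{S}'_{n,l}$. The equality $h_{n,k} = \#\mathcal{S}_{n,k} = \#\mathcal{S}'_{n,k}$ then follows from $\#\mathcal{I}_{n,k} = \#\mathcal{I}'_{n,k} = h_{n,k}$ recalled in the text.
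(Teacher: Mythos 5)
Your overall route is the same as the paper's: verify that the map lands in $\mathcal{S}_n$, build an inverse step by step, and read off the partition compatibility. Your forward argument (tracking the $0$--$1$ indicator sequence of each element $i$ across $I_0,\dots,I_n$, noting that a down-step can occur only at step $i$ and happens exactly when $\#S_i=2$) is correct and amounts to the paper's case analysis, and your treatment of the partitions ($1$ and $n$ can never be removed once inserted, so $S_1^{-1}$ and $S_n^{-1}$ are singletons recording the first-appearance indices) is also fine and is what the paper leaves as straightforward.

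The genuine problem is in the inverse. The ``unified formula'' $I_i=(I_{i-1}\setminus\{i\})\cup S_i$ that you ultimately endorse is wrong, and the parenthetical claim supporting it --- that $\#S_i=1$ forces $i\notin I_{i-1}$ --- is false. Take $(S_1,S_2,S_3)=(\{2\},\{1,3\},\{2\})\in\mathcal{S}_3$ (it appears in Figure~\ref{fig:repartitionS3}): the construction gives $I_1=\{2\}$, $I_2=\{1,3\}$, so $3\in I_2$ even though $\#S_3=1$, and the unified formula would yield $I_3=\{1,2\}\neq[3]$, so the output violates $\#I_3=3$ and the map is not an inverse. The correct recursion is the case split you mention only in passing (and which is the paper's): $I_i=I_{i-1}\sqcup S_i$ when $\#S_i=1$, and $I_i=(I_{i-1}\setminus\{i\})\sqcup S_i$ when $\#S_i=2$. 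Relatedly, the sentence ``the conditions on $S_i^{-1}$ guarantee that \dots the size goes up by one'' glosses over the actual work: one must prove that when $\#S_i=2$ the element $i$ really lies in $I_{i-1}$ (so a removal takes place) and that the elements being adjoined are new. The paper does this by carrying along the inductive invariant $\min\{k : j\in I_k\}=\min S_j^{-1}$ during the reverse construction; your entrance/exit bookkeeping from the forward direction can be recycled for this purpose, but it has to be stated and proved inside the inverse construction rather than assumed.
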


\begin{proof}
For all $i \in [n]$, let $S_i = I_i \backslash I_{i-1}$. There are two situations.
\begin{enumerate}
\item If $i \in I_{i-1} \cap I_i$ or $i \not\in I_{i-1}$, then $I_i = I_j \sqcup \{j\}$ for some $j \not\in [n]$, and $\# S_i = 1$.
\item Else $i \in I_{i-1}$ and $i \not\in I_i$, in which case $I_i = (I_{i-1} \backslash \{i\}) \sqcup \{j_1,j_2\}$ for some $(j_1,j_2) \in [n]^2$, and $\# S_i = 2$. Also, let 
\begin{align*}
i_1 &= \min \{j \in [n] : i \in I_j\} < i,\\
i_2 &= \min \{j \in [i,n] : i \in I_j\} > i,
\end{align*}
then $S_i^{-1} = \{i_1,i_2\}$. 
\end{enumerate}
So $(S_i)_{i \in [n]} \in \mathcal{S}_n$. The inverse map is obtained as follows. Let \linebreak $(S_i)_{i \in [n]}~\in~\mathcal{S}_n$ and $I_0 = \emptyset$. For all $i \in [n]$, suppose that we have defined $I_0,\hdots,I_{i-1}$ with the conditions (\ref{conditionI1}) and (\ref{conditionI2}), and the additional condition for all $j \in [n]$ :
\begin{equation}
\label{conditionI3}
\min \{ k \in [i-1] : j \in I_k\} = \min S_j^{-1}.
\end{equation}
If $\# S_i = 1$, then $I_i$ is defined as $I_{i-1} \sqcup S_i$. Otherwise $S_i^{-1} = \{i_1,i_2\}$ with $i_1 < i < i_2$, and $i \in I_{i_1}$ in view of condition (\ref{conditionI3}), hence $i \in I_{i-1}$, and $I_i$ is defined as $(I_{i-1} \backslash \{i\}) \sqcup S_i$. In both cases $I_0,\hdots,I_i$ have the conditions (\ref{conditionI1}),(\ref{conditionI2}) and (\ref{conditionI3}), and $(I_i)_{i \in [0,n]} \in \mathcal{I}_n$. The rest of the lemma is straightforward.
\end{proof}

\begin{rem}
For all $(S_i)_{i \in [n]} \in \mathcal{S}_n$, the inverse image $(I_i)_{i \in [0,n]}$ is also given by $I_i = \left( \bigcup_{j = 1}^i S_j \right) \backslash \{j \in [i] : \min S_j^{-1} < i < \max S_j^{-1}\}$.
\end{rem}

For example, the $h_3 = 2+3+2$ elements of $\mathcal{S}_3$ are partitionned as depicted in Figure \ref{fig:repartitionS3}.

\begin{figure}[h]
\begin{center}

\begin{tikzpicture}[scale=1]

\draw (-4.6,3.5) -- (-4.6,-0);
\draw (-0.9,3.5) -- (-0.9,-0);
\draw (2.85,3.5) -- (2.85,-0);

\draw (-5.5,2.5) -- (6.5,2.5);
\draw (-5.5,1.5) -- (6.5,1.5);
\draw (-5.5,0.5) -- (6.5,0.5);

\draw (1,3) node[scale=1] {$\{1\},\{3\},\{2\}$};
\draw (4.75,3) node[scale=1] {$\{1\},\{2\},\{3\}$};
\draw (-2.75,2) node[scale=1] {$\{3\},\{1\},\{2\}$};
\draw (1,2) node[scale=1] {$\{2\},\{1,3\},\{2\}$};
\draw (4.75,2) node[scale=1] {$\{2\},\{1\},\{3\}$};
\draw (-2.75,1) node[scale=1] {$\{3\},\{2\},\{1\}$};
\draw (1,1) node[scale=1] {$\{2\},\{3\},\{1\}$};

\draw (-5.2,3) node[scale=1] {$\mathcal{S}_{3,1}$};
\draw (-5.2,2) node[scale=1] {$\mathcal{S}_{3,2}$};
\draw (-5.2,1) node[scale=1] {$\mathcal{S}_{3,3}$};

\draw (-2.75,0) node[scale=1] {$\mathcal{S}'_{3,1}$};
\draw (1,0) node[scale=1] {$\mathcal{S}'_{3,2}$};
\draw (4.75,0) node[scale=1] {$\mathcal{S}'_{3,3}$};

\end{tikzpicture}
\end{center}
\caption{The partition of $\mathcal{S}_3$.}
\label{fig:repartitionS3}
\end{figure}

\begin{rem}
There is a natural injection $\mathfrak{S}_n \hookrightarrow \mathcal{S}_n : \sigma \mapsto (\{\sigma(i)\})_{i \in [n]}$, which is the analogous of the elements $(I_i)_{i \in [0,n]}$ with the conditions
\begin{align*}
\#I_i &= i,\\
I_{i-1} &\subset I_i
\end{align*}
forming a subset of $\mathcal{I}_n$ and labelling the torus fixed points of the flag variety $\mathcal{F}_n$~\cite{Feigin}.
\end{rem}

The bijection $\mathcal{S}'_{n,n} \rightarrow \mathcal{S}_{n-1}$, from which arises Formula (\ref{eq:hn1}), is the plain map $(S_1,\hdots,S_n) \mapsto (S_1,\hdots,S_{n-1})$. The involution $(S_1,\hdots,S_n) \in \mathcal{S}_n \mapsto (S_1^t,\hdots,S_n^t)$, defined by replacing every occurrence of $1$ (respectively $n$) by $n$ (respectively $1$) in all $S_i^t$, induces the bijection \linebreak$\mathcal{S}_{n,k} \cap \mathcal{S}'_{n,l} \rightarrow \mathcal{S}_{n,l} \cap \mathcal{S}'_{n,k}$. The involution $(S_1,\hdots,S_n)_n \rightarrow (S_1^r,\hdots,S_n^r)$, defined by $S_i^r = \{n+1-j : j \in S_{n+1-i}\}$, induces the bijection $\mathcal{S}_{n,k} \cap \mathcal{S}'_{n,l} \rightarrow \mathcal{S}_{n,n+1-l} \cap \mathcal{S}'_{n,n-k+1}$, from which follows Formula (\ref{eq:symmetryofhnk}).

\section{Bijective equivalence with Hetyei's model}
\label{sec:bijection}

\begin{defi}[map $\varphi : \mathcal{I}_n \rightarrow \mathcal{M}_n$]
Let $I = (I_0,\hdots,I_n) \in \mathcal{I}_n$ and $L_0 = (n,\hdots,1)$. Consider $k \in [n]$ and suppose that we have defined~:
\begin{itemize}
\item a multiset $\{u_{n-k+2},v_{n-k+2},\hdots,u_n,v_n\}$, such that $(u_l,v_l) \in [l]^2$ for all $l \in [n-k+2,n]$, which contains the set $[n-k+2,n]$;
\item a tuple $L_{k-1} = (j_1^{k-1},j_2^{k-1},\hdots,j_{n-k+1}^{k-1})$ such that $$\{j_1^{k-1},\hdots,j_{n-k+1}^{k-1}\} = [n] \backslash I_{k-1}.$$
\end{itemize}
We now define $(u_{n-k+1},v_{n-k+1}) \in [n-k+1]^2$ and $L_{k}$ as follows.

\begin{enumerate}[label=\arabic*.]

\item If $I_{k-1} \subset I_k$, let $p \in [n-k+1]$ such that $I_k = I_{k-1} \sqcup \{j_p^{k-1}\}$.

\begin{enumerate}[label=\alph*)]

\item If $k \in I_{k-1}$, we define $\{u_{n-k+1},v_{n-k+1}\}$ as $\{p,p\}$.

\item Otherwise, we define $\{u_{n-k+1},v_{n-k+1}\}$ as $\{p,n-k+1\}$. 
\end{enumerate}

In either case, let $$L_k = (j_1^{k-1},\hdots,j_{p-1}^{k-1},j_{n-k+1}^{k-1},j_{p+1}^{k-1},\hdots,j_{n-k}^{k-1}).$$

\item Otherwise $k \in I_{k-1}$ and $k \not\in I_k$, hence $I_k = (I_{k-1} \backslash \{k\}) \sqcup \{j_p^{k-1},j_q^{k-1}\}$ for some $1 \leq p < q \leq n-k+1$. We define $\{u_{n-k+1},v_{n-k+1}\}$ as $\{p,q\}$, and $$L_{k} = (j_1^{k-1},\hdots,j_{p-1}^{k-1},j_{n-k+1}^{k-1},j_{p+1}^{k-1},\hdots,j_{q-1}^{k-1},k,j_{q+1}^{k-1},\hdots,j_{n-k}^{k-1}).$$
\end{enumerate}
For the algorithm to move to $k+1$, we just need to show that \linebreak$n-k+1 \in \{u_{n-k+1},v_{n-k+1},\hdots,u_n,v_n\}$.
It is obvious if $\{u_{n-k+1},v_{n-k+1}\}$ is defined by Rule~1.b). Otherwise, by hypothesis, we have $k \in I_{k-1}$. Let $i_0 = \min \{i \in [n] : k \in I_i\} \in [k-1]$. By construction of $L_1,\hdots,L_{k-1}$, it is easy to see that $j_{n-k+1}^{i_0 - 1} = k$, hence $n-k+1 \in \{u_{n+1-i_0},v_{n+1-i_0}\}$ by either Rule 1.a) or Rule 2.

This algorithm provides a tuple $(\{u_l,v_l\})_{l \in [n]} \in \mathcal{M}_n$, that we denote by $\varphi(I)$.
\end{defi}

For example, let $I = (\emptyset,\{3\},\{1,3\},\{1,3,4\},\{1,2,3,5\},[5]) \in \mathcal{I}_5$ and $L_0 = 54321$. We obtain $\varphi(I) = (\{u_l,v_l\})_{l \in [5]}$ where
\begin{align*}
\{u_5,v_5\} &= \{3,5\}, L_1 = 5412 \text{ (rule 1.b))},\\
\{u_4,v_4\} &= \{3,4\}, L_2 = 542 \text{ (rule 1.b))},\\
\{u_3,v_3\} &= \{2,2\}, L_3 = 52 \text{ (rule 1.a))},\\
\{u_2,v_2\} &= \{1,2\}, L_4 = 4 \text{ (rule 2.)},\\
\{u_1,v_1\} &= \{1,1\}, L_5 = \varnothing  \text{ (rule 1.a))}.\\
\end{align*}

\begin{prop} The map $\varphi : \mathcal{I}_n \rightarrow \mathcal{M}_n$ is bijective. \end{prop}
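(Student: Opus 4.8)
The plan is to construct the inverse map $\psi : \mathcal{M}_n \rightarrow \mathcal{I}_n$ explicitly by reversing the algorithm defining $\varphi$, and to check that $\psi \circ \varphi = \mathrm{id}$ and $\varphi \circ \psi = \mathrm{id}$. Since $\varphi$ builds $(\{u_l,v_l\})_{l\in[n]}$ by processing $k = 1,2,\ldots,n$ and at step $k$ produces the pair indexed by $l = n-k+1$, the inverse should read the pairs in the order $l = 1, 2, \ldots, n$ (equivalently $k = n, n-1, \ldots, 1$) and reconstruct $I_n, I_{n-1}, \ldots, I_0$ together with the auxiliary tuples $L_n, L_{n-1}, \ldots, L_0$. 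First I would note that $L_n = \varnothing$ always (since $\#I_n = n$ forces $[n]\backslash I_n = \varnothing$), and more generally that $L_k$ is a tuple listing the $n-k$ elements of $[n]\backslash I_k$ in a specific order determined by the history; the crucial structural fact, already implicit in the definition, is that $L_k$ is obtained from $L_{k-1}$ by one of three reversible local moves. So given $L_k$ and the pair $\{u_{n-k+1}, v_{n-k+1}\}$ one recovers $L_{k-1}$ and the triple ($I_{k-1}$, whether $k \in I_{k-1}$, the relation between $I_{k-1}$ and $I_k$).

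The key steps, in order: (1) Define $\psi$ by setting $I_n = [n]$, $L_n = \varnothing$, and then for $k = n, n-1, \ldots, 1$, inspect $\{u_{n-k+1}, v_{n-k+1}\} \in [n-k+1]^2$ and distinguish the cases $u = v$ (came from Rule 1.a), $\{u,v\} = \{p, n-k+1\}$ with $p < n-k+1$ (Rule 1.b), and $\{u,v\} = \{p,q\}$ with $p < q \le n-k$ (Rule 2) — these three cases are mutually exclusive and exhaustive, which is the first thing to verify carefully. In each case, reverse the corresponding update: recover the position(s) $p$ (and $q$), read off which value of $L_k$ sits at position that was overwritten, and reinsert $k$ or $j_{n-k+1}^{k-1}$ to rebuild $L_{k-1}$ and $I_{k-1}$ from $I_k$. (2) Check by induction on $k$ (downward) that the reconstructed $(I_0, \ldots, I_n)$ satisfies conditions (\ref{conditionI1}) and (\ref{conditionI2}), and that each $L_k$ is a valid listing of $[n]\backslash I_k$; here one reuses the invariant from the $\varphi$-definition, namely that $n-k+1 \in \{u_{n-k+1}, \ldots, u_n, v_n\}$, to guarantee the reconstruction never gets stuck. (3) Verify $\psi \circ \varphi = \mathrm{id}$ and $\varphi \circ \psi = \mathrm{id}$ by matching the forward and backward local moves step by step; since both maps are deterministic and process the indices in opposite orders through the same case analysis, this is a bookkeeping check once the case distinction in (1) is pinned down.

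The main obstacle I expect is step (1): showing that the three cases — Rule 1.a), Rule 1.b), Rule 2 — can be \emph{recognized} purely from the data $\{u_{n-k+1}, v_{n-k+1}\}$ (together with the already-reconstructed $L_k$), without knowing $I$ in advance, and that the recognition is unambiguous. The subtlety is that Rule 1.a) outputs $\{p,p\}$ and Rule 2 outputs $\{p,q\}$ with $p<q\le n-k$, while Rule 1.b) outputs $\{p, n-k+1\}$; a pair like $\{p, n-k+1\}$ with $p = n-k+1$ could in principle be read as $\{p,p\}$ from Rule 1.a), so one must check this collision cannot occur, i.e. that Rule 1.b) is invoked only when $p \le n-k$ (which holds because $I_k = I_{k-1}\sqcup\{j_p^{k-1}\}$ and $k\notin I_{k-1}$ forces $j_p^{k-1}\ne k$, but the bound $p\le n-k$ needs the precise relabelling in $L_k$). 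Similarly one must confirm that in Rule 2 the index $q$ never equals $n-k+1$. Once these boundary facts are established, the rest of the argument is routine unwinding of the definitions.
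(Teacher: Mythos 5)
There is a genuine gap, and it sits exactly where you anticipated trouble: step (1). The two boundary facts you propose to verify are false. First, Rule 1.b) can be invoked with $p=n-k+1$: whenever $k\notin I_{k-1}$ and $I_k=I_{k-1}\sqcup\{k\}$, the element $k$ sits at position $n-k+1$ of $L_{k-1}$ (this is the invariant the paper itself uses), so $p=n-k+1$ and Rule 1.b) outputs the pair $\{n-k+1,n-k+1\}$, which has the Rule 1.a) shape; for instance $\varphi(\emptyset,\{1\},\{1,2\},[3])=(\{1,1\},\{2,2\},\{3,3\})$, where every pair is produced by Rule 1.b). Your parenthetical argument (``$k\notin I_{k-1}$ forces $j_p^{k-1}\neq k$'') is backwards: $k\notin I_{k-1}$ means precisely that $k$ is still listed in $L_{k-1}$ and may well be the adjoined element. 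Second, Rule 2 can occur with $q=n-k+1$: for $(\emptyset,\{2\},\{1,3\},[3])$, step $k=2$ applies Rule 2 with $(p,q)=(1,2)$, $q=n-k+1$, giving the pair $\{1,2\}$ of Rule 1.b) shape. Worse, the recognition you want is impossible in principle from the data you allow yourself: for $n=2$, $\varphi(\emptyset,\{1\},[2])=(\{1,1\},\{2,2\})$ uses Rule 1.b) at $k=2$ while $\varphi(\emptyset,\{2\},[2])=(\{1,2\},\{1,1\})$ uses Rule 1.a) at $k=2$, yet both give the same local data $\{u_1,v_1\}=\{1,1\}$, $I_2=[2]$, $L_2=\varnothing$, and the two $I_1$ differ; so no deterministic rule based on the current pair together with $I_k,L_k$ can reconstruct $I_{k-1}$. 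A related deficit affects your reconstruction step even once the rule is known: to rebuild $L_{k-1}$ you must restore at position $p$ the newly adjoined element $j_p^{k-1}$, which is not visible in $(I_k,L_k)$ --- it is not ``$k$ or $j_{n-k+1}^{k-1}$'' as you write.

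The missing idea is that the disambiguation must be global: which rule was used at step $k$ is encoded in whether $n-k+1$ occurs among the pairs $\{u_l,v_l\}$ with $l\geq n-k+2$, which is equivalent to $k\in I_{k-1}$. The paper's inverse exploits exactly this and, crucially, runs in the same forward order as $\varphi$: starting from $I_0=\emptyset$ and $L_0=(n,\ldots,1)$, it applies case I when $u_{n-k+1}=v_{n-k+1}$ or $n-k+1\notin\{u_{n-k+2},v_{n-k+2},\ldots,u_n,v_n\}$, and case II otherwise. Run forward, Rules 1.a) and 1.b) never need to be distinguished (the update $I_k=I_{k-1}\sqcup\{j_p^{k-1}\}$ is the same for both, with $p$ read off from the pair and, in the 1.b) situation, from the fact that $n-k+1$ must then lie in the current pair since $[n]$ is covered by $M$), and the adjoined elements are read off from the already-known $L_{k-1}$, so neither obstruction above arises; the only point to check is that case II is legitimate, i.e.\ $k\in I_{k-1}$, which follows from the occurrence of $n-k+1$ in a later pair. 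If you insist on a backward reconstruction, you would have to carry this global criterion plus extra invariants identifying the adjoined elements, at which point you are essentially redoing the paper's forward argument in heavier notation.
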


\begin{proof}
We construct the inverse map of $\varphi$. Let $M = (\{u_l,v_l\})_{l \in [n]} \in \mathcal{M}_n$, $L_0 = (n,\hdots,1)$ and $I_0 = \emptyset$. Suppose that, for some $k \in [n]$, we defined subsets $I_0,\hdots,I_{k-1}$ of $[n]$ with conditions (\ref{conditionI1}) and (\ref{conditionI2}), and a tuple $L_{k-1} = (j_1^{k-1},\hdots,j_{n-k+1}^{k-1})$ with $\{j_1^k,\hdots,j_{n-k+1}^{k-1}\} = [n] \backslash I_{k-1}$. We define $I_k$ and $L_k$ as follows.

\begin{enumerate}[label=\Roman*.]

\item If $u_{n-k+1} = v_{n-k+1}$ or $n-k+1 \not\in \{u_{n-k+2},v_{n-k+2},\hdots,u_n,v_n\}$, there exists $p \in [n-k+1]$ such that $\{u_{n-k+1},v_{n-k+1}\} = \{p,p\}$ or $\{p,n-k+1\}$. We define $I_{k}$ as $I_{k-1} \sqcup \{j_p^{k-1}\}$, and $L_{k}$ as in Rule 1.

\item Otherwise $\{u_{n-k},v_{n-k}\} = \{p,q\}$ for some $1 \leq p < q \leq n-k+1$. We define $I_{k}$ as $( I_{k-1} \backslash \{k\} ) \sqcup \{j_p^{k-1},j_q^{k-1}\}$, and $L_{k}$ as in Rule~2.
\end{enumerate}

For the algorithm to iterate, we only need to prove that $\# I_k = k$ if it is defined by Rule II. In this context, let $n-i_0+1 = \max \{l \in [n] : n-k+1 \in \{u_l,v_l\}\}$, by hypothesis $i_0 \in [k-1]$. By construction of $L_1,\hdots,L_{k-1}$, we have $j_{n-1+k}^{i_0-1} = k$, hence $k \in I_{i_0}$, which implies that $k \in I_{k-1}$ in view of condition (\ref{conditionI2}).

So this algorithm provides an element $(I_0,\hdots,I_n) \in \mathcal{I}_n$ that we denote by $\phi(M)$, and it is straightforward that $\varphi$ and $\phi$ are inverse maps.
\end{proof}

\begin{defi}
Let $M = (\{u_l,v_l\})_{l \in [n]} \in \mathcal{M}_n$, we define a tuple $n = l_{1}>l_{2}>\hdots>l_{m} \geq 1$ as follows : if $u_{l_{i}} = v_{l_{i}} = l_i$, then $m$ is defined as $i$, otherwise we define $l_{i+1}$ as $\min \{u_{l_{i}},v_{l_{i}}\} < l_i$. This tuple is well-defined because $u_1 = v_1 = 1$ in general.

Afterwards, for all integer $l \in [l_{m},n]$, let $i \in [m]$ such that $l \in [l_{i},l_{i-1}-~1]$ (where $l_{0}$ is defined as $n+1$), we say that $l$ is \textit{$M$-redundant} if $l_{i} \in~\{u_l,v_l\}$. Note that the set of such integers is not empty because it contains $l_m$.
\end{defi}

We now define two partitions of $\mathcal{M}_n$, namely $\{\mathcal{M}_{n,k} : k \in [n]\}$ and $\{\mathcal{M}'_{n,l} : l \in [n]\}$, as follows.

\begin{defi}
For all $n \geq 1$ and $k \in [n]$, we define $\mathcal{M}_{n,k}$ (respectively $\mathcal{M}'_{n,l}$) as the set of the tuples $M \in \mathcal{M}_n$ such that
$$\max \{ i \in [n] : \text{ $i$ is $M$-redundant}\} = n-k+1$$
(respectively
$$\max \{ i \in [n] : 1 \in \{u_i,v_i\}\} = n-l+1).$$
\end{defi}

One can check that $\varphi(\mathcal{I}_{n,k}) = \mathcal{M}_{n,k}$ and $\varphi(\mathcal{I}'_{n,l}) = \mathcal{M}'_{n,l}$, hence $$\# \mathcal{M}_{n,k} = \# \mathcal{M}'_{n,k} = h_{n,k}.$$

For example, consider the tuple $$I = (\emptyset,\{3\},\{1,3\},\{1,3,4\},\{1,2,3,5\},[5]) \in \mathcal{I}_{5,2} \cap \mathcal{I}'_{5,4}$$ studied earlier, and its image $ M = \varphi(I) = (\{u_l,v_l\})_{l \in [5]}$. We can see in Picture \ref{fig:M} that $M \in \mathcal{M}_{5,2} \cap \mathcal{M}'_{5,4}$.

\begin{figure}[h]
\begin{center}

\begin{tikzpicture}[scale=1]

\draw (0,0) node[scale=1] {$\{\underline{1,1}\},$};
\draw (1.2,0) node[scale=1] {$\{\underline{1,2}\},$};
\draw (2.4,0) node[scale=1] {$\{2,2\},$};
\draw (3.6,0) node[scale=1] {$\{\underline{3,4}\},$};
\draw (4.8,0) node[scale=1] {$\{3,5\}$};

\draw (4.8,0) circle (0.5);
\draw (2.35,0) circle (0.5);
\draw (1.15,0) circle (0.5);
\draw (-0.05,0) circle (0.5);

\draw (4.8,0.5) to[bend right] (2.35,0.5);
\draw (2.35,0.5) to[bend right] (1.15,0.5);
\draw (1.15,0.5) to[bend right] (-0.05,0.5);

\end{tikzpicture}
\end{center}
\caption{The tuple $\varphi(M) \in \mathcal{M}_5$.}
\label{fig:M}
\end{figure}
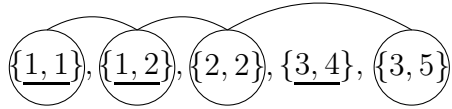

The $h_3=2+3+2$ elements of $\mathcal{M}_3$ are partitionned as depicted in Figure \ref{fig:repartitionM3}, which is the image of the partition of Figure \ref{fig:repartitionI3} by~$\varphi$.

\begin{figure}[h]
\begin{center}

\begin{tikzpicture}[scale=1]

\draw (-4.6,3.5) -- (-4.6,-0);
\draw (-0.9,3.5) -- (-0.9,-0);
\draw (2.85,3.5) -- (2.85,-0);

\draw (-5.5,2.5) -- (6.5,2.5);
\draw (-5.5,1.5) -- (6.5,1.5);
\draw (-5.5,0.5) -- (6.5,0.5);

\draw (1,3) node[scale=1] {$\{1,1\},\{1,2\},\{3,3\}$};
\draw (4.75,3) node[scale=1] {$\{1,1\},\{2,2\},\{3,3\}$};
\draw (-2.75,2) node[scale=1] {$\{1,1\},\{1,2\},\{1,3\}$};
\draw (1,2) node[scale=1] {$\{1,1\},\{1,2\},\{2,3\}$};
\draw (4.75,2) node[scale=1] {$\{1,1\},\{2,2\},\{2,3\}$};
\draw (-2.75,1) node[scale=1] {$\{1,1\},\{2,2\},\{1,3\}$};
\draw (1,1) node[scale=1] {$\{1,1\},\{1,1\},\{2,3\}$};

\draw (-5.2,3) node[scale=1] {$\mathcal{M}_{3,1}$};
\draw (-5.2,2) node[scale=1] {$\mathcal{M}_{3,2}$};
\draw (-5.2,1) node[scale=1] {$\mathcal{M}_{3,3}$};

\draw (-2.75,0) node[scale=1] {$\mathcal{M}'_{3,1}$};
\draw (1,0) node[scale=1] {$\mathcal{M}'_{3,2}$};
\draw (4.75,0) node[scale=1] {$\mathcal{M}'_{3,3}$};

\end{tikzpicture}
\end{center}
\caption{The partition of $\mathcal{M}_3$.}
\label{fig:repartitionM3}
\end{figure}

\end{document}